\tikzset{commutative diagrams/diagrams={baseline=-2.5pt},commutative diagrams/arrow style=tikz}
\newcommand\A{\mathbb{A}}
\newcommand{\cA}{\mathcal{A}}
\DeclareMathOperator{\cok}{cok}
\newcommand{\set}[1]{\left\{{#1}\right\}}
\newcommand\id{\mathrm 1}
\newcommand\End{\operatorname{End}}
\newcommand\Ext{\operatorname{Ext}}
\renewcommand\P{\mathbb P}
\newcommand{\Spec}{\operatorname{Spec}}
\newcommand{\beq}[1]{\begin{equation}\label{#1} }
\newcommand{\eeq}{\end{equation}}
\newcommand{\pgap}{\vspace{5pt}}
\newtheorem{prop}[equation]{Proposition}
\newtheorem{thm}[equation]{Theorem}
\newtheorem{lem}[equation]{Lemma}
\newtheorem{defn}[equation]{Definition}
\newtheorem{cor}[equation]{Corollary}
\makeatletter \@addtoreset{equation}{section} \makeatother
\let\oldtocsection=\tocsection
\let\oldtocsubsection=\tocsubsection
\let\oldtocsubsubsection=\tocsubsubsection
\renewcommand{\tocsection}[3]{\hspace{0em}\oldtocsection{#1}{#2}{#3}}
\renewcommand{\tocsubsection}[3]{ \hspace{1em} \oldtocsubsection{#1}{\small{#2}}{\small{#3}} }
\renewcommand{\tocsubsubsection}[3]{\hspace{2em}\oldtocsubsubsection{#1}{\small{#2}}{\small{#3}}}
\newcommand{\marginparstretch}{0.6}
\let\oldmarginpar\marginpar
\renewcommand\marginpar[1]{\-\oldmarginpar[\framebox{\setstretch{\marginparstretch}\begin{minipage}{\marginparwidth}{\raggedleft\scriptsize #1}\end{minipage}}]{\framebox{\setstretch{\marginparstretch}\begin{minipage}{\marginparwidth}{\raggedright\scriptsize #1}\end{minipage}}}}
\begin{document}

\title{A non-commutative Bertini theorem}

\author{J{\o}rgen Vold Rennemo}
\address{University of Oslo \\ Blindern \\ N-0315 Oslo \\ Norway}
\email{jvrennemo@gmail.com}

\author{Ed Segal}
\address{University College London \\ 25 Gordon St \\ London WC1H  0AY\\ UK}
\email{e.segal@ucl.ac.uk}

\author{Michel Van den Bergh}
\address{Universiteit Hasselt \\ Universitaire Campus \\ B-3590 Diepenbeek \\ Belgium}
\email{michel.vandenbergh@uhasselt.be}
\maketitle

\begin{abstract}
We prove a version of the classical `generic smoothness' theorem with smooth varieties replaced by non-commutative resolutions of singular varieties.
This in particular implies a non-commutative version of the Bertini theorem.
\end{abstract}

\section{Introduction}
Let $k$ be a field of characteristic 0 and let $X$ be a smooth variety over $k$, equipped with a morphism $f: X \to \P^n$ for some $n$. 
The classical Bertini Theorem, in one of its formulations, states that for a general hyperplane $H\subset \P^n$ the fibre-product $X_H = X\times_{\P^n} H$ will still be smooth. The aim of this short note is to establish a non-commutative analogue of this theorem, where instead of assuming $X$ is smooth, we assume it comes provided with a \emph{non-commutative resolution}.

For an affine variety  $X = \Spec S$, we use the following definition of non-commutative resolution.

\begin{defn}[\cite{vandenbergh_noncommutative_2004, spenko_non-commutative_2015}]
Let $S$ be a normal Noetherian domain. A \emph{non-commutative resolution} of $S$ is an $S$-algebra of finite global dimension of the form $A = \End_{S}(M)$ where $M$ is a non-zero finitely generated reflexive $S$-module. 
The resolution is said to be \emph{crepant} if in addition $S$ is Gorenstein and $A$ is a maximal Cohen--Macaulay $S$-module.
\end{defn}

The idea is that the category $A$-mod should be a reasonable substitute for the category of coherent sheaves on a geometric resolution of $\Spec S$. The most important property is the requirement that $A$ has finite global dimension, this is `smoothness' of the resolution. The fact that $A$ is `birational' to $S$ is also encoded in this definition: $M$ must be locally-free (and non-zero) over some Zariski open subset $U\subset \Spec S$, then $A_U = \End(M_U)$ is a trivial Azumaya algebra and is Morita equivalent to $S_U$. In fact if the resolution is crepant one can prove that $M$ must be locally-free over the smooth locus in $\Spec(S)$. This is no longer true for non-crepant resolutions, see \cite{dao_noncommutative_2016}.

We abbreviate \emph{non-commutative resolution} and \emph{non-commutative crepant resolution} to NCR and NCCR respectively. When $X$ is not affine, we simply replace the single algebra $A$ with a sheaf of algebras.
\begin{defn} Let $X$ be a normal variety. An NCR (resp.~NCCR) of $X$ is a coherent sheaf of algebras $A$ on $X$, such that for any affine open subset $U = \Spec S \subset X$, the algebra $A|_{U}$ is an NCR (resp.~NCCR) of $S$.
\end{defn}

Our path to the Bertini theorem follows a standard approach, similar to the commutative case, and passes through some other results which are interesting in their own right. The starting point is a non-commutative version of ``generic smoothness'': the geometrically intuitive result that if $f \colon X \to Y$ is a map of varieties and $X$ is nonsingular, then for a generic point $y \in Y$ the fibre  $f^{-1}(y)$ is nonsingular.

\begin{thm}\label{ncGenericSmoothness}
Let $k$ be a field of characteristic zero,  let $R$ be a finitely-generated Noetherian domain over $k$, and let $S$ be a commutative finitely-generated  normal $R$-algebra.  Let $A$ be a non-commutative resolution of $S$. Then for a general point $p \in \Spec(R)$, the algebra $A_{k(p)}$ is an NCR of $S_{k(p)}$, which is moreover crepant if $A$ is.
\end{thm}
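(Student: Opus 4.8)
The plan is to reduce everything to the generic fibre by a spreading-out argument: the generic fibre automatically satisfies all the required properties, and one then transports them to a dense open locus of base points. After replacing $R$ by the image of $R\to S$ (a finitely generated domain over $k$, since $S$ is one) we may assume $R\hookrightarrow S$. By Grothendieck generic freeness we may shrink $\Spec(R)$ so that $S$, $M$, $A=\End_S(M)$ and $A\otimes_R A^{\mathrm{op}}$ are all free over $R$; in particular each of these is $R$-flat, and its formation commutes with base change $R\to k(p)$ for any $p$. Write $\eta$ for the generic point and $K=k(\eta)=\operatorname{Frac}(R)$.

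I would first dispose of the \emph{ring-theoretic} conditions. At the generic point, $S_K$ is a localization of the normal Noetherian domain $S$, hence is again a normal Noetherian domain (and Gorenstein if $S$ is); $M_K$ is a localization of the reflexive finitely generated module $M$, hence is a non-zero finitely generated reflexive $S_K$-module; one has $\End_S(M)\otimes_R K=\End_{S_K}(M_K)$ because $\Hom$ out of a finitely presented module commutes with the flat base change $R\to K$; and $A_K$ is a localization of $A$, hence is maximal Cohen--Macaulay over $S_K$ whenever $A$ is over $S$. Each of reflexivity, normality, Gorenstein-ness and the maximal Cohen--Macaulay property is governed by Serre-type depth conditions along the fibres of $\Spec(S)\to\Spec(R)$, and these are constructible; since they hold on the generic fibre, they hold on every fibre over some dense open subset of $\Spec(R)$ (EGA IV). Likewise $\End_S(M)\otimes_R k(p)=\End_{S_{k(p)}}(M_{k(p)})$ and $M_{k(p)}\neq 0$ for general $p$. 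So on a dense open it remains only to prove that $A_{k(p)}$ has finite global dimension.

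The heart of the matter is finite global dimension of the fibre, and here I would argue \emph{bimodule-theoretically}, because finiteness of global dimension phrased via simple modules is not visibly a constructible condition. The algebra $A$ is module-finite over its centre $S$ (as $S$ is normal and $M$ is faithful), $S$ is finitely generated over $k$, and $A$ has finite global dimension; hence $A_K$ is module-finite over the finitely generated $K$-algebra $S_K$ and has finite global dimension (being a localization of $A$). The key input — and, I expect, the only genuinely non-formal step — is that such an algebra is \emph{homologically smooth} over $K$, i.e.\ the diagonal bimodule $A_K$ lies in $\Perf(A_K\otimes_K A_K^{\mathrm{op}})$; this is the noncommutative ``regular implies smooth'' principle, available because $\operatorname{char}K=0$ so $K$ is perfect, and provable by a local analysis of $A_K$ over (the completions of) its centre. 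Granting this, fix a bounded resolution $P_\bullet\to A_K$ by finitely generated projective $A_K\otimes_K A_K^{\mathrm{op}}$-modules. Since $A\otimes_R A^{\mathrm{op}}$ is $R$-flat and Noetherian with $(A\otimes_R A^{\mathrm{op}})\otimes_R K=A_K\otimes_K A_K^{\mathrm{op}}$, a standard spreading-out argument produces, after shrinking $\Spec(R)$ once more, a bounded complex $\widetilde P_\bullet$ of finitely generated projective $A\otimes_R A^{\mathrm{op}}$-modules together with a quasi-isomorphism $\widetilde P_\bullet\to A$ restricting over $\eta$ to $P_\bullet\to A_K$ (the cohomology of the cone is finitely generated over $A\otimes_R A^{\mathrm{op}}$ and vanishes generically, hence vanishes after a further shrink). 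Thus $A\in\Perf(A\otimes_R A^{\mathrm{op}})$.

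Finally, for any $p$ in the resulting dense open I would base change $\widetilde P_\bullet\to A$ along $R\to k(p)$: the terms stay finitely generated projective, the target becomes $A_{k(p)}$, and $R$-flatness of $A$ makes this base change underived, so one gets a finite projective resolution of $A_{k(p)}$ over $(A\otimes_R A^{\mathrm{op}})\otimes_R k(p)=A_{k(p)}\otimes_{k(p)}A_{k(p)}^{\mathrm{op}}$. Hence $A_{k(p)}$ is homologically smooth over $k(p)$, and the usual comparison $\operatorname{gl.dim}A_{k(p)}\le\operatorname{pd}_{A_{k(p)}\otimes_{k(p)}A_{k(p)}^{\mathrm{op}}}A_{k(p)}<\infty$ via a bimodule resolution completes the argument; combined with the second paragraph this shows $A_{k(p)}$ is an NCR of $S_{k(p)}$ for all $p$ in a dense open subset, crepant whenever $A$ is. I expect the main obstacle to be the homological-smoothness input over $K$; a secondary point demanding care is verifying that the relevant Serre/depth conditions are genuinely constructible in the family $\Spec(S)\to\Spec(R)$, so that they do spread out from $\eta$.
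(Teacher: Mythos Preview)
Your outline is essentially the paper's own strategy: pass to the generic fibre, upgrade finite global dimension to homological smoothness over $K$, spread the diagonal bimodule resolution out over an open of $\Spec(R)$, and base-change to conclude; the ring-theoretic side conditions are handled by generic freeness plus constructibility exactly as you suggest. The one place where the paper is more explicit is precisely the step you flag as the main obstacle: rather than a ``local analysis over completions of the centre'', the paper establishes that $A_K$ is smooth over $K$ by first passing to $\overline{K}$, using that $A_{\overline{K}}^{\mathrm{op}}$ also has finite global dimension (Auslander) and then that $A_{\overline{K}}^{\mathrm{op}}\otimes_{\overline{K}}A_{\overline{K}}$ has finite global dimension because both factors are PI algebras over an algebraically closed field (a lemma of Stafford--Van den Bergh), whence the diagonal is perfect; smoothness then descends from $\overline{K}$ to $K$.
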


Here $k(p)$ denotes the residue field at $p$. When we say `for a general point' here we mean that there is a Zariski open subset $U\subset \Spec(R)$ such that $A_{k(p)}$ is an NCR for every $p\in U$. 

This theorem is the technical contribution of this paper, our other results are easy corollaries. Its proof is not difficult, but neither is it purely formal, since it does require that the ground field $k$ has characteristic 0. Indeed, without further assumptions, none of the results are true in characteristic $p$ even in the commutative setting. There is an elementary and well-known counterexample: set $A=S=k[x]$ and $R=k[t]$, and consider the homomorphism $t \mapsto x^p$. 
If char$(k)=p$ then for every $y \in \Spec R$ the fibre $A_y$ has infinite global dimension.
\pgap

From Theorem \ref{ncGenericSmoothness}, we easily deduce an `affine non-commutative Bertini's theorem'. Stating an affine version of Bertini's theorem takes a little care, even for complex varieties. There are examples of resolutions $\pi: X\to \Spec S$ where $S$ is local, such that for any $f\in S$ lying in the maximal ideal the slice $\set{\pi^*f=0}$ is singular.
\begin{thm} \label{ncBertiniAffine}
Let $S$ be a normal Noetherian domain over a field $k$ of characteristic zero, and let $A$ be an NCR of $S$. Fix a finite-dimensional $k$-vector space $V\subset S$ with $1\in V$. Then for a general element $f\in V$, the algebra $A/fA$ is an NCR of $S/fS$, which is crepant if $A$ is.
\end{thm}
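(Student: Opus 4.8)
The plan is to exhibit all the hyperplane sections $S/fS$, $f\in V$, as the fibres of a single family over an affine space, and then to invoke Theorem~\ref{ncGenericSmoothness}. Let $R=\Sym_k(V^*)$, so that $\Spec R$ is the affine space whose $k$-points are the elements of $V$, and let $\mathbf f\in S\otimes_k R$ be the universal element, i.e.\ the image of $\mathrm{id}_V$ under the inclusion $V\otimes_k V^*\hookrightarrow S\otimes_k R$ coming from $V\subset S$. I would set
\[
S'=(S\otimes_k R)/(\mathbf f),\qquad A'=(A\otimes_k R)/(\mathbf f),\qquad M'=(M\otimes_k R)/(\mathbf f),
\]
where $\mathbf f$ acts on $A\otimes_k R$, resp.\ $M\otimes_k R$, through the structure map $S\to A$. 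For $f\in V$ regarded as a $k$-point of $\Spec R$ one has $S'\otimes_R k=S/fS$, $A'\otimes_R k=A/fA$ and $M'\otimes_R k=M/fM$, because $\mathbf f$ restricts to $f$ at that point; so the fibres of this family over $k$-points are exactly the objects appearing in the statement.

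The point of the hypothesis $1\in V$ is to make the total space of the family as nice as $S$ itself. Extending $1=e_0,e_1,\dots,e_m$ to a basis of $V$ with dual coordinates $x_0,\dots,x_m$ on $\Spec R$, one has $\mathbf f=x_0+\sum_{i\ge1}e_i x_i$, and eliminating $x_0$ produces ring isomorphisms $S'\cong S[x_1,\dots,x_m]$, $A'\cong A[x_1,\dots,x_m]$ and $M'\cong M[x_1,\dots,x_m]$. In particular $S'$ is a normal Noetherian domain and a finitely generated $R$-algebra, so the hypotheses of Theorem~\ref{ncGenericSmoothness} are in place as soon as one checks that $A'$ is an NCR of $S'$.

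For that, since $M$ is finitely generated, forming endomorphisms commutes with the flat base change $k\to k[x_1,\dots,x_m]$, whence $\End_{S'}(M')\cong \End_S(M)[x_1,\dots,x_m]=A'$ as $S'$-algebras; the module $M'$ is non-zero, finitely generated and reflexive (reflexivity over a normal domain is inherited by polynomial extensions), and the global dimension of $A'$ equals that of $A$ plus $m$, hence is finite. If moreover $A$ is crepant, then $S'=S[x_1,\dots,x_m]$ is Gorenstein and $A'$ is a maximal Cohen--Macaulay $S'$-module, since depth and Krull dimension each increase by $m$ under the polynomial extension; so $A'$ is a crepant NCR of $S'$.

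It then remains to apply Theorem~\ref{ncGenericSmoothness} to the triple $(R,S',A')$, which yields a dense open $U\subseteq\Spec R$ such that $A'_{k(p)}$ is an NCR of $S'_{k(p)}$ — crepant if $A'$, hence $A$, is — for every $p\in U$. Since $\mathrm{char}\,k=0$ the field $k$ is infinite, so the $k$-points of $U$ are Zariski dense in $\Spec R$, and under the identification of these $k$-points with elements of $V$ the conclusion reads exactly that $A/fA$ is an NCR of $S/fS$, crepant if $A$ is, for a general $f\in V$. The genuine content of this reduction is entirely carried by Theorem~\ref{ncGenericSmoothness}; the remaining ingredients — the base-change identity $\End_{S'}(M')\cong A'$, the stability of reflexivity and of the maximal Cohen--Macaulay property under polynomial extension, and the additivity of global dimension — are routine, and I expect the only genuinely subtle point to be the (mild) use of the hypothesis $1\in V$ to guarantee normality of the total space $S'$.
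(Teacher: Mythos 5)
Your argument is correct and is essentially identical to the paper's: form the universal family over $R=k[x_0,\dots,x_m]$, use the hypothesis $1\in V$ to identify $S'\cong S[x_1,\dots,x_m]$ (and likewise for $A'$, $M'$), check that $A'$ is an NCR of $S'$, and apply Theorem~\ref{ncGenericSmoothness}. The paper states the same construction more tersely and simply asserts that $A'=A\otimes_S S'$ is an NCR of $S'$; you spell out the routine verification (base change of $\End$, reflexivity and MCM under polynomial extension, additivity of global dimension), which is a welcome elaboration but not a different route.
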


Finally, we turn to our main motivation, which is the non-commutative version of the projective Bertini theorem.

Let $(X, A)$  be a normal $k$-variety equipped with a non-commutative resolution, and let $f \colon X \to \P^n$ be a morphism. Given a hyperplane $H\subset \P^n$, the fibre-product $X_H$ carries a sheaf of algebras $A_H$ by restriction.
\begin{thm}\label{ncBertiniProj} 
For a general hyperplane $H \subset \P^{n}$, the pair $(X_H, A_H)$ is a non-commutative resolution of $X_H$, which is crepant if $(X,A)$ is.
\end{thm}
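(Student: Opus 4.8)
The plan is to assemble all the hyperplane sections $X_H$ into a single family, equip its total space with a non-commutative resolution, and then quote Theorem~\ref{ncGenericSmoothness}. Let $(\P^n)^\vee$ be the dual projective space and $\cH \subseteq \P^n \times (\P^n)^\vee$ the universal hyperplane, with projections $q \colon \cH \to \P^n$ and $r \colon \cH \to (\P^n)^\vee$. One checks directly (over each standard chart of $\P^n$) that $q$ is, Zariski-locally on $\P^n$, the trivial $\P^{n-1}$-bundle; in particular $q$ is smooth. Form $\cX := X \times_{\P^n} \cH$ via $f$ and $q$, with induced morphisms $\pi \colon \cX \to X$ (the base change of $q$) and $g \colon \cX \to (\P^n)^\vee$ (the second projection followed by $r$). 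Then $\pi$ is a Zariski-locally-trivial $\P^{n-1}$-bundle, so $\cX$ is a normal variety and admits an affine open cover by charts isomorphic to $\Spec S[y_1,\dots,y_{n-1}]$ with $\Spec S \subseteq X$ affine open, while $g^{-1}(H) = X \times_{\P^n} H = X_H$ for $H \in (\P^n)^\vee$. Put $\tilde A := \pi^* A$, a coherent sheaf of algebras on $\cX$ with $\tilde A|_{g^{-1}(H)} = A_H$, which on a chart $\Spec S[y_1,\dots,y_{n-1}]$ as above restricts to $(A|_{\Spec S})[y_1,\dots,y_{n-1}]$.

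\emph{Step 1: $\tilde A$ is an NCR of $\cX$, crepant if $A$ is.} This is a local condition, so we verify it on the above cover. On a chart with coordinate ring $T = S[y_1,\dots,y_{n-1}]$, write $A|_{\Spec S} = \End_S(M)$ with $M$ finitely generated, non-zero and reflexive. Then $\tilde A|_{\Spec T} = \End_S(M)\otimes_S T = \End_T(M\otimes_S T)$, using that $\End$ commutes with the flat base change $S\to T$ since $M$ is finitely presented; the $T$-module $M\otimes_S T$ is finitely generated, non-zero and reflexive over the normal ring $T$; and $\End_S(M)[y_1,\dots,y_{n-1}]$ has finite global dimension, equal to that of $\End_S(M)$ plus $n-1$, by the Hilbert syzygy theorem for polynomial extensions of Noetherian rings. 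Hence $\tilde A$ is an NCR of $\cX$. If $A$ is crepant, then each relevant $S$ is Gorenstein and $\End_S(M)$ is maximal Cohen--Macaulay over $S$; the polynomial extension $S\to T$ preserves both properties, so $\tilde A$ is then a crepant NCR of $\cX$.

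\emph{Step 2: apply Theorem~\ref{ncGenericSmoothness} fibrewise.} Choose a finite affine cover $(\P^n)^\vee = \bigcup_\alpha \Spec R_\alpha$ and, for each $\alpha$, a finite affine cover $\{\Spec S_{\alpha\beta}\}_\beta$ of $g^{-1}(\Spec R_\alpha)$. Each $R_\alpha$ is a finitely-generated Noetherian domain over $k$, each $S_{\alpha\beta}$ is a finitely-generated normal $R_\alpha$-algebra via $g$, and $\tilde A|_{\Spec S_{\alpha\beta}}$ is an NCR of $S_{\alpha\beta}$, crepant if $A$ is, by Step~1. Theorem~\ref{ncGenericSmoothness} then provides, for each $(\alpha,\beta)$, a dense open $U_{\alpha\beta}\subseteq \Spec R_\alpha$ such that $(\tilde A|_{\Spec S_{\alpha\beta}})_{k(p)}$ is an NCR of $(S_{\alpha\beta})_{k(p)}$, crepant if $A$ is, for every $p\in U_{\alpha\beta}$. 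Let $U\subseteq (\P^n)^\vee$ be the complement of the union of the finitely many closures of the proper closed subsets $(\Spec R_\alpha)\setminus U_{\alpha\beta}$; this is a dense open set. For $H\in U$, the affine schemes $\Spec(S_{\alpha\beta}\otimes_{R_\alpha} k(H))$, for all $(\alpha,\beta)$ with $H\in \Spec R_\alpha$, form an affine open cover of $X_H = g^{-1}(H)$ on which $A_H = \tilde A|_{X_H}$ restricts to the NCR $(\tilde A|_{\Spec S_{\alpha\beta}})_{k(H)}$ (crepant if $A$ is). Since being an NCR (resp.~NCCR) is a local condition, $(X_H,A_H)$ is a non-commutative resolution of $X_H$ for every $H\in U$, crepant if $(X,A)$ is.

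Granting Theorem~\ref{ncGenericSmoothness}, which is the only substantive input, the crux is Step~1: that pulling back along the projective bundle $\pi$ preserves being a (crepant) non-commutative resolution. This is transparent here precisely because the bundle is Zariski-locally a polynomial extension, so that finite global dimension, reflexivity, the Gorenstein property and maximal Cohen--Macaulayness are all visibly inherited; the remaining technical nuisance is merely that $(\P^n)^\vee$ is not affine, which forces the passage in Step~2 to a finite affine cover and the intersection of the resulting good loci.
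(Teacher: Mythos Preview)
Your proof is correct and essentially parallel to the paper's, but organised differently. The paper deduces the projective statement from the affine Bertini theorem (Theorem~\ref{ncBertiniAffine}) by covering $\P^n$ with $n+1$ affine charts; the affine theorem in turn is proved by building a one-parameter family $S' \cong S\otimes_k k[x_1,\dots,x_n]$ over $\A^{n+1}$ and quoting Theorem~\ref{ncGenericSmoothness}. You instead skip the affine intermediate result and build the universal family $\cX\to(\P^n)^\vee$ directly, verifying that the pulled-back sheaf $\tilde A$ is an NCR on the total space (via Hilbert syzygy and the stability of reflexivity, Gorenstein, MCM under polynomial extension), and then quoting Theorem~\ref{ncGenericSmoothness} on an affine cover of the base. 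The verifications in your Step~1 are exactly what the paper's proof of Theorem~\ref{ncBertiniAffine} uses implicitly when it asserts without comment that $A' = A\otimes_S S'$ is an NCR of $S'$; you have simply made them explicit and packaged everything in one step rather than two. The paper's route has the advantage of isolating the affine statement as a result of independent interest; yours is arguably more geometric and avoids the double covering (of $\P^n$ and then of $X$) in favour of a single incidence-variety construction.
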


The first- and second-named authors came across this question in the context of a specific NCCR of a Pfaffian variety \cite{rennemo_hori-mological_2016}; see that paper for an application of these results.

No doubt these results still hold if we replace $A$ with an appropriate sheaf of smooth categories (or dg-categories) $\cA$ on $X$, rather than assuming that $\cA=A$-mod as we do here. 
This would allow for example modules over a sheaf of DGAs, or modules twisted by a gerbe. However, we didn't feel the need to work in that level of generality for this paper.

\subsection{Acknowledgements} We thank Michael Wemyss for helpful conversations. JVR would like to acknowledge the contribution of Icelandair, who stranded him in Reykjavik for a very productive 24 hours.
\pgap

This project has received funding from the European Research Council (ERC) under the European Union Horizon 2020 research and innovation programme (grant agreement No.725010).

\section{Proof of theorems}
\label{sec:proofs}

The bulk of this (rather skinny) paper is the proof of the following proposition:

\begin{prop}\label{genericFGD} 
Let $R$ be a Noetherian domain whose fraction field $K(R)$ is of characteristic 0, let $S$ be a finitely generated commutative $R$-algebra, and suppose $A$ is an $S$-algebra which is finite rank as an $S$-module.

If $A$ has finite global dimension, then for a generic $p\in \Spec R$, the algebra $A_{k(p)}$ has finite global dimension.
\end{prop}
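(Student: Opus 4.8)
The plan is to reduce the statement to a spreading-out of \emph{homological smoothness}, with characteristic zero entering at exactly one step. \emph{Step 1 (generic flatness).} Since $R$ is a Noetherian domain, $S$ a finitely generated $R$-algebra, and $A$ a finite $S$-module, generic freeness yields a nonzero $r\in R$ with $A_r$ and $S_r$ free over $R_r$; replacing $R$ by $R_r$ I may assume $A$ is a free $R$-module. Then $A$ is a Noetherian ring, being finite over the Noetherian ring $S$, and $R$ maps into the centre of $A$.

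\emph{Step 2 (the generic fibre).} Let $K=K(R)$, of characteristic zero. As $A_K=A\otimes_RK$ is the localization of $A$ at the central multiplicative set $R\setminus\{0\}$, its global dimension is at most that of $A$, hence finite; moreover $A_K$ is Noetherian and module-finite over the finitely generated commutative $K$-algebra $S_K$. The key assertion I would prove is that such an $A_K$ --- a finitely generated Noetherian $K$-algebra of finite global dimension, module-finite over a commutative subalgebra --- is \emph{homologically smooth over $K$}: it lies in $\Perf(A_K\otimes_KA_K^{\mathrm{op}})$, hence admits a finite resolution by finitely generated projective $A_K$-bimodules.

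This is the step I expect to be the main obstacle, and it is the only place characteristic zero is needed. Since $K$ is perfect, finiteness of global dimension is preserved under base change along $K\to\bar K$ (every finite subextension being separable), which reduces the claim to an algebraically closed ground field; over such a field, finite global dimension of a finitely generated Noetherian algebra forces homological smoothness --- for the centre this is the statement that a regular finitely generated algebra over a perfect field is smooth, and the general case is obtained by working along the finite morphism to the centre --- and homological smoothness over $K$ itself then follows by faithfully flat descent. Over an imperfect base field the implication fails; this matches the counterexample $A=S=k[x]$, $R=k[t]$, $t\mapsto x^p$ from the introduction, for which $K(R)=k(t)$ has characteristic $p$.

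\emph{Step 3 (spreading out and specializing).} A finite projective bimodule resolution of $A_K$ arises by the base change $-\otimes_RK$ from a bounded complex of finitely generated $A_{r'}\otimes_{R_{r'}}A_{r'}^{\mathrm{op}}$-modules for some nonzero $r'\in R$; since the rings in play are Noetherian and the relevant conditions (exactness of the complex, projectivity of its terms) are detected by finitely generated modules that vanish after inverting $R\setminus\{0\}$, after inverting one further nonzero element of $R$ the complex is exact with finitely generated projective terms. Because $A_{r'}$ is free over $R_{r'}$, the ring $A_{r'}\otimes_{R_{r'}}A_{r'}^{\mathrm{op}}$ is free over $R_{r'}$, so every term of the resolution is flat over $R_{r'}$; hence for each $p\in\Spec R_{r'}$ the base change along $R\to k(p)$ stays exact and exhibits $A_{k(p)}$ as a perfect complex over $A_{k(p)}\otimes_{k(p)}A_{k(p)}^{\mathrm{op}}$, i.e.\ $A_{k(p)}$ is homologically smooth over $k(p)$. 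Finally, an algebra $B$ over a field that is homologically smooth over that field has finite global dimension: resolve the diagonal $B$-bimodule by finitely generated projectives and apply $-\otimes_BM$ to obtain a bounded projective resolution of any left $B$-module $M$. Taking $U=\Spec R_{r'}$, intersected with the open set of Step 1, completes the argument.
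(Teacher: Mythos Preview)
Your proposal is correct and follows essentially the same chain of implications as the paper: localize to $K$, pass to $\bar K$ using separability (this is where characteristic zero enters), convert finite global dimension into homological smoothness, descend to $K$, spread out over an open set of $\Spec R$, and specialize. The step you flag as the main obstacle --- that finite global dimension over $\bar K$ forces $A_{\bar K}\in\Perf(A_{\bar K}\otimes_{\bar K}A_{\bar K}^{\mathrm{op}})$ --- is exactly where the paper supplies the missing ingredients: Auslander's result that $A^{\mathrm{op}}_{\bar K}$ inherits finite global dimension, together with the Stafford--Van den Bergh lemma that a tensor product of two finitely generated PI algebras of finite global dimension over an algebraically closed field again has finite global dimension, so in particular the enveloping algebra does and the diagonal bimodule is perfect.
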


One can compare this to Schofield's result that for algebras which are finite-dimensional over a field,  having finite global dimension is a Zariski open condition \cite{schofield_bounding_1985}.

Our proof works by flipping between `finite global dimension' and the following property of `non-commutative smoothness', whose definition appears to be due to Kontsevich. Recall that a module is perfect if it has finite projective dimension.

\begin{defn} Let $R$ be a commutative ring, and let $A$ be an $R$-algebra. We declare that $A$ is \emph{smooth over $R$} if $A$ is flat over $R$ and $A$ is perfect as a module over $A^{op}\otimes_R A$. 
\end{defn}

In other words, if we work relative to $R$ then the diagonal bimodule is perfect.  The flatness assumption is just to avoid needing to work with the derived tensor product $A^{op}\otimes^L_R A$.

\begin{proof}[Proof of Proposition \ref{genericFGD}] Here we just give the outline of the argument, the necessary supporting lemmas are in Section \ref{sec.lemmas}. Let $K = K(R)$ and let $\overline{K}$ be its algebraic closure. We prove our claim via a chain of implications $(i) \Rightarrow (i+1)$. Observe that all the algebras involved, $A$, $A^{op}$, $A_K$,\ldots are finite rank over a commutative Noetherian ring so they are Noetherian (Lemma \ref{lem.Noetherian}).
\begin{enumerate}
\item $A$ has finite global dimension.
\item $A_{K}$ has finite global dimension.

This is Lemma \ref{thm:globalDimensionPreservedByFlatBaseChange}.

\item $A_{\overline{K}}$ has finite global dimension.

 This follows from Lemma \ref{separableFGD} -- the extension $\overline{K}/K$ is separable because $\operatorname{char}(K)=0$.

\item $A_{\overline{K}}$ is smooth over $\overline{K}$.

By  Lemma \ref{lem.AopFGD} the algebra $A^{op}_{\overline{K}}$ has finite global dimension. Then   $A_{\overline{K}}^{op} \otimes_{\overline{K}} A_{\overline{K}}$ also has finite global dimension by Lemma \ref{productFGD}, in particular the diagonal bimodule is perfect.

\item $A_{K}$ is smooth over $K$.

 This is  Lemma \ref{thm:smoothnessDescends}.

\item There is an $f\in R$ such that $A_f$ is smooth over $R_f$. 

By Corollary \ref{thm:FiniteProjectiveDimensionIsZariskiOpen} (with $T=S\otimes_R S$, $\Lambda = A^{op} \otimes_{R} A$ and $M = A$) there is an $f\in R$ such that $A_f$ is perfect over $A^{op}_f\otimes_{R_f} A_f$. 
By generic freeness (Lemma \ref{thm:GrothendieckGenericFreeness}) we may choose $f$ so that $A_{f}$ is flat over $R_{f}$, and so $A_f$ is smooth over $R_f$. 

\item $A_{k(p)}$ is smooth over $k(p)$ for a general $p \in \Spec S$.

 Take $f$ as above and let $U=\Spec(R_f)$. Take a bounded resolution of $A_f$ by finitely generated projective $A_f^{op}\otimes_{R_f} A_f$ modules. By shrinking $U$ further we may assume that all these modules are flat over $U$, then restricting to $p\in U$ we have a bounded projective resolution of $A_{k(p)}$. 

\item $A_{k(p)}$ has finite global dimension for a general $p \in \Spec S$. 

We can get a projective resolution of any  $A_{k(p)}$-module by tensoring it with our projective resolution of the diagonal $A_{k(p)}$-bimodule.
\end{enumerate}

\end{proof}

Assume now that we are in the setting of Theorem \ref{ncGenericSmoothness}:
\begin{itemize}
\item $k$ is a field of characteristic zero,
\item $R$ is a finitely-generated Noetherian domain over $k$,
\item $S$ is a commutative finitely-generated normal algebra over $R$, and
\item $M$ is a reflexive $S$-module such that $A = \End_{S}(M)$ is an NCR of $S$.
\end{itemize}

We want to prove that for a general point $p\in \Spec R$, the algebra $A_{k(p)}$ is an NCR of $S_{k(p)}$, which is crepant if $A$ is. We've already discussed the most important aspect -- the finiteness of the global dimension -- but there are a few remaining details.

\begin{lem}
\label{thm:genericPropertiesEasy}
For a general point $p \in \Spec R$, $S_{k(p)}$ is normal, $M_{k(p)}$ is a reflexive $S_{k(p)}$-module, and $A_{k(p)} = \End_{S_{k(p)}}(M_{k(p)})$.

If $S$ is Gorenstein, then so is $S_{k(p)}$, and if $A$ is a maximal Cohen--Macaulay $S$-module, then $A_{k(p)}$ is a maximal Cohen--Macaulay $S_{k(p)}$-module.
\end{lem}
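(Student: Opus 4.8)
The strategy is to reduce everything to generic-freeness arguments (Lemma \ref{thm:GrothendieckGenericFreeness}) together with the fact that the defining properties — normality, reflexivity, being the full endomorphism algebra, Gorenstein, maximal Cohen–Macaulay — can each be detected by the vanishing or exactness of suitable finitely presented complexes, which spreads out over a dense open subset of $\Spec R$. First I would handle the identity $A_{k(p)} = \End_{S_{k(p)}}(M_{k(p)})$: the natural map $A \otimes_R k(p) \to \End_{S_{k(p)}}(M_{k(p)})$ is an isomorphism once base change commutes with $\Hom$, and since $M$ is finitely presented over $S$ this holds whenever $M$ is flat over $R$ — so apply generic freeness to $M$ (and to $\cok$ of a presentation of $M$) to find $f \in R$ with $M_f$ free, hence flat, over $R_f$.

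Next, normality of $S_{k(p)}$. Using Serre's criterion, $S$ normal means $R_0 + S_2$. Reflexivity of $M$ likewise has a cohomological characterization in terms of depth along codimension-$\le 1$ loci, or equivalently the exactness of a Koszul-type complex built from a regular sequence. For both statements I would: (a) by generic freeness make $S$ flat over $R$ so that fibres have the expected dimension and the generic fibre is geometrically normal/reflexive after the characteristic-$0$ separability argument already used in the proof of Proposition \ref{genericFGD}; (b) observe that the non-normal locus of $S \to \Spec R$, i.e. the support of $\cok(S \to \widetilde{S})$ where $\widetilde S$ is the normalization, is closed and misses the generic fibre, hence its image in $\Spec R$ is contained in a proper closed subset — so over a dense open $U \subset \Spec R$ every fibre is normal; (c) argue identically for $M$ using that it is the support of $\cok(M \to M^{\vee\vee})$, which again misses the generic fibre. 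The Gorenstein claim is the same argument applied to the (closed) non-Gorenstein locus of the morphism.

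For the maximal Cohen–Macaulay statement, I would use that $A$ is MCM over $S$ is equivalent to $\Ext^i_S(A, \omega_S) = 0$ for all $i > 0$ (with $\omega_S$ a dualizing module, which exists after shrinking), and each such $\Ext$ is a finitely generated $S$-module whose support is closed; since $A$ is MCM over $S$ and $S$ is flat over $R$, the generic fibre is MCM, so these supports miss the generic fibre and hence lie over a proper closed subscheme of $\Spec R$. Intersecting the finitely many open subsets produced in each step gives the desired dense open $U$.

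The main obstacle, I expect, is item (a): ensuring that the \emph{generic fibre} $S_K$ (and $M_K$) really is normal, respectively reflexive, rather than merely $S$ itself being normal over the total space. This requires combining flatness over $R$ with the behaviour of $R_k$ and $S_k$ conditions under the field extension $K(R) \supset k$ — precisely where characteristic $0$ (separability/geometric normality) enters, exactly as in steps $(2)\Rightarrow(3)$ of Proposition \ref{genericFGD}. Once the generic fibre is known to have the desired property, spreading out is routine.
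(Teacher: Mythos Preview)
Your treatment of $A_{k(p)} = \End_{S_{k(p)}}(M_{k(p)})$ is essentially the paper's: from a presentation $S^m \to S^n \to M \to 0$ one gets a four-term exact sequence $0 \to A \to M^n \to M^m \to N \to 0$, and by generic freeness one makes $A$, $M$, $N$ all flat over $R$, so the sequence stays exact on fibres. (Note that the module to which you must apply generic freeness is this $N$, not ``the cokernel of a presentation of $M$'', which would just be $M$ itself.)

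For the remaining properties the paper simply cites a reference (the relevant openness theorems for fibrewise normality, Gorenstein, Cohen--Macaulay, $S_k$, etc.\ in families), whereas you attempt direct arguments. As written those arguments are circular. You propose to control the ``non-normal locus of $S$'' via the support of $\cok(S\to\widetilde S)$, the ``non-reflexive locus'' via the support of $\cok(M\to M^{\vee\vee})$, and the ``non-MCM locus'' via the supports of $\Ext^i_S(A,\omega_S)$; but since $S$ is already normal, $M$ already reflexive, and $A$ already MCM, every one of these modules is \emph{zero on the nose}, so their supports are empty and you learn nothing about the fibres. The missing step is precisely that the constructions in question---normalization, double dual, $\Ext^i_S(-,\omega_S)$---commute with restriction to fibres over a dense open of $\Spec R$. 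For the double dual this can be done by the same generic-freeness-on-an-exact-sequence trick as for $\End$, but for normality, Gorenstein, and Cohen--Macaulay this compatibility is exactly the non-trivial content of the EGA\,IV/Flenner openness theorems you would be trying to reprove. Your final paragraph correctly flags that getting the property on the \emph{generic} fibre is an issue, but the genuine obstacle is the spreading out from the generic fibre to nearby fibres, which your ``support misses the generic fibre'' argument does not address.
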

\begin{proof}
Take a presentation for $M$, \emph{i.e.} let $M = \cok \phi$ for $\phi \colon S^{m} \to S^{n}$.
The fact that $A = \End_{S}(M)$ means that $A = \ker (\phi^{T} \otimes id_{M})$, so there is an exact sequence
\[
0 \to A \to M^{n} \to M^{m} \to N := \cok (\phi^{T} \otimes id_{M}).
\]
By generic freeness (Lemma \ref{thm:GrothendieckGenericFreeness}), we may assume (after passing to an open set in $\Spec R$) that $A$, $M$ and $N$ are flat $R$-modules.
It follows that $A_{k(p)}$ is the kernel of $M_{k(p)}^{n} \to M_{k(p)}^{m}$, which means precisely that $A_{k(p)} = \End_{S_{k(p)}}(M_{k(p)})$.

The other properties are all consequences of \cite[Thm.~3.3.10, Thm.~3.3.15]{flenner_joins_1999}.
\end{proof}

\begin{proof}[Proof of Theorem \ref{ncGenericSmoothness}]
Combine Lemma \ref{thm:genericPropertiesEasy} and Prop.~\ref{genericFGD}.
\end{proof}

From here our `non-commutative Bertini' theorems follow easily.

\begin{proof}[Proof of Theorem \ref{ncBertiniAffine}]
Let $S$ be an normal, finitely-generated $k$-algebra and let $A$ be an NCR of $S$. Fix $V = \langle 1, f_1,..., f_n\rangle\subset S$. Now set  $R=k[x_{0},..., x_{n}]$, and let
$$S' = \frac{S\otimes_k k[x_0, ..., x_n]}{(x_{0}+ x_{1}f_{1} + \cdots + x_{n}f_{n})} \cong S\otimes_k k[x_1,..., x_n]. $$
Then $S'$ is an integral normal algebra over $R$, and $A' = A\otimes_S S' $ is an NCR of $S$. Given a point $f\in V = \Spec R$, the fibres of $S$ and $A$ over $f$ are $S/fS$ and $A/fA$. Hence by Thm.~\ref{ncGenericSmoothness}, for a general element $f\in V$ the algebra $A/fA$ is an NCR of $S/fS$, and it's an NCCR if $A$ is an NCCR.
\end{proof}

\begin{proof}[Proof of Theorem \ref{ncBertiniProj}]
A hyperplane in $\P^{n}$ is a divisor defined by an equation $\sum_{k=0}^{n}a_{i}x_{i}$, with $a_{i} \in k$, so the result follows from Theorem \ref{ncBertiniAffine} by seeing $\P^{n}$ as a union of $n+1$ copies of $\A^{n}$.  
\end{proof}

\section{Technical lemmas} \label{sec.lemmas}
In this section we provide the lemmas necessary for the proof of Proposition \ref{genericFGD}.
The argument is spelled out in detail for the benefit of those readers who are not very familiar with non-commutative algebras (the first two authors of this paper themselves belonging to this group), and in the hope of making it clear where each hypothesis is used.

\begin{lem}\label{lem.Noetherian} Let $S$ be a Noetherian commutative ring, and let $A$ be an $S$-algebra which has finite rank as an $S$-module. Then $A$ is Noetherian.
\end{lem}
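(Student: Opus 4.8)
The plan is to reduce the statement about the (possibly non-commutative) ring $A$ to the classical fact that a finitely generated module over a commutative Noetherian ring satisfies the ascending chain condition on submodules. First I would recall that $A$ being ``an $S$-algebra which has finite rank as an $S$-module'' means precisely that there is a ring homomorphism $S \to Z(A) \subseteq A$ (landing in the centre) making $A$ into an $S$-module, and that $A$ is finitely generated as an $S$-module. The key observation is then that every left ideal $I \subseteq A$ is in particular an $S$-submodule of $A$: for $s \in S$ and $x \in I$, we have $sx = (s\cdot 1_A)x \in I$ because $s \cdot 1_A \in A$ and $I$ is closed under left multiplication by elements of $A$. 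Hence any ascending chain of left ideals of $A$ is an ascending chain of $S$-submodules of the finitely generated $S$-module $A$.

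Since $S$ is commutative and Noetherian and $A$ is a finitely generated $S$-module, $A$ is a Noetherian $S$-module, so every ascending chain of $S$-submodules of $A$ stabilises. Applying this to chains of left ideals shows $A$ is left Noetherian; the identical argument applied to right ideals (which are likewise $S$-submodules, using that $S$ maps into the centre, or simply working with the opposite algebra, which is again finite rank over $S$) shows $A$ is right Noetherian. Thus $A$ is Noetherian.

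I do not expect a genuine obstacle here: the only mild point of care is to make explicit that left (resp.\ right) ideals of $A$ are automatically $S$-submodules, which is immediate once one uses that the structure map $S \to A$ has image in the centre. Everything else is the standard fact that submodules of a finitely generated module over a commutative Noetherian ring satisfy the ACC.
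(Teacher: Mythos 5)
Your argument is correct and is essentially the same as the paper's one-line proof, which observes that any (left or right) ideal of $A$ is an $S$-submodule of the Noetherian $S$-module $A$ and hence finitely generated. You have simply phrased it via the ascending chain condition and spelled out the centrality of the image of $S$, which the paper leaves implicit.
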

\begin{proof}
Any ideal of $A$ must be a finitely-generated $S$-module, hence a finitely-generated $A$-module.
\end{proof} 

\begin{lem}
\label{thm:globalDimensionPreservedByFlatBaseChange}
Let $A$ be an algebra over an integral domain $R$ such that $A$ has finite global dimension. If $A_{K(R)}$ is Noetherian then it has finite global dimension.
\end{lem}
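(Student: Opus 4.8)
The plan is to show that $\operatorname{gldim} A_{K(R)} \le \operatorname{gldim} A =: d < \infty$. Since $A_{K(R)}$ is Noetherian, it suffices to bound the projective dimension of every finitely generated left $A_{K(R)}$-module $N$, or equivalently to show $\Ext^{d+1}_{A_{K(R)}}(N, N') = 0$ for all finitely generated $N, N'$. The key observation is that $A_{K(R)} = A \otimes_R K(R)$ is a flat (indeed, localization-type) base change of $A$: writing $K = K(R) = \operatorname{colim}_{f} R_f$ as a filtered colimit of localizations, or simply using that $K$ is flat over $R$, one has for finitely generated modules a base-change isomorphism $\Ext^i_{A_K}(N \otimes_R K, N' \otimes_R K) \cong \Ext^i_A(N, N') \otimes_R K$ — but this requires $N, N'$ to descend to finitely generated $A$-modules, which is not automatic.

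So the first step I would take is a descent/spreading-out argument: given a finitely generated $A_K$-module $N$, choose finitely many generators and finitely many relations; these involve only finitely many denominators, so there is $f \in R$ and a finitely generated $A_f$-module $\tilde N$ with $\tilde N \otimes_{R_f} K \cong N$. Do the same for $N'$ (enlarging $f$). Now $A_f$ still has finite global dimension — it is a localization of $A$, and localization can only decrease global dimension, so $\operatorname{gldim} A_f \le d$. Hence $\Ext^{d+1}_{A_f}(\tilde N, \tilde N') = 0$. The second step is to propagate this vanishing along the flat base change $R_f \to K$: since $K$ is flat over $R_f$ and $\tilde N$ is finitely generated (so admits a resolution by finitely generated $A_f$-modules, $A_f$ being Noetherian by Lemma \ref{lem.Noetherian}), we get $\Ext^{d+1}_{A_K}(N, N') \cong \Ext^{d+1}_{A_f}(\tilde N, \tilde N') \otimes_{R_f} K = 0$. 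Since $N, N'$ were arbitrary finitely generated $A_K$-modules and $A_K$ is Noetherian, this forces $\operatorname{gldim} A_K \le d$.

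The main obstacle, and the only place the Noetherian hypothesis on $A_{K(R)}$ is genuinely needed, is the reduction to finitely generated modules together with the commutation of $\Ext$ with the flat base change: the isomorphism $\Ext^i_{A_f}(\tilde N, \tilde N') \otimes_{R_f} K \cong \Ext^i_{A_K}(\tilde N \otimes K, \tilde N' \otimes K)$ holds because $\tilde N$ has a resolution by finitely generated projective (hence finitely presented) $A_f$-modules and $- \otimes_{R_f} K$ is exact and commutes with the relevant $\Hom$'s on finitely presented modules. A clean alternative phrasing, which avoids the explicit spreading-out, is to note that global dimension of a Noetherian ring is computed on finitely generated modules, and that $A_K = A \otimes_R K$ together with the fact that any finitely generated $A_K$-module is a base change of a finitely generated $A$-module after inverting one element of $R$; I would present whichever of these two routes reads most cleanly. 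Either way the proof is short once the finiteness bookkeeping is set up.
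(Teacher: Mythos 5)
Your argument is correct and rests on the same key idea as the paper's proof: use the Noetherian hypothesis to present a finitely generated $A_K$-module, descend that presentation along the flat central localization, apply the finite global dimension of $A$, and transport the conclusion back to $A_K$. The paper's version is a bit more economical --- it clears denominators to descend the presentation $\phi\colon A_K^m\to A_K^n$ directly to a map $\phi'\colon A^m\to A^n$ over $A$ itself, so that $M'=\cok\phi'$ is perfect and localizing its finite projective resolution gives one for $M=M'_K$, thereby avoiding both the detour through $A_f$ and the flat base change for $\Ext$; this is in effect the alternative phrasing you sketch in your final paragraph.
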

\begin{proof}
Let $K = K(R)$. Every finitely generated $A_{K}$-module $M$ is the cokernel of some $\phi \colon A_{K}^{m} \to A_{K}^{n}$.
Multiplying by denominators, we can ensure that $\phi$ is the localisation of a map $\phi' \colon A^{m} \to A^{n}$, and so setting $M' = \cok(\phi')$ we find $M_{K}' = M$.
Since $M'$ is perfect, so is $M$.  
\end{proof}

We say that an infinite field extension $L/K$ is separable if it is algebraic and every finite subextension is separable.

\begin{lem}\label{separableFGD} Let $A$ be an algebra over a field $K$. Let $L$ be a separable field extension of $K$, and let $A_L = A\otimes_K L$. If $A$ has finite global dimension, and $A_{L}$ is Noetherian, then $A_L$ has finite global dimension.
\end{lem}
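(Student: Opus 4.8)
The plan is to prove the sharper statement $\operatorname{gl.dim}(A_L)\le \operatorname{gl.dim}(A)$, and to reduce at once to the case in which $L/K$ is \emph{finite} separable. For the reduction, note that since $A_L$ is Noetherian its global dimension is the supremum of $\operatorname{pd}_{A_L}(M)$ over finitely generated left $A_L$-modules $M$, and each such $M$ is finitely presented, say $M=\cok(\phi)$ for some $\phi\colon A_L^m\to A_L^n$. The finitely many matrix entries of $\phi$ involve only finitely many elements of $L$, which — as $L/K$ is algebraic — generate a finite subextension $L'/K$; thus $\phi$ is the base change along $A_{L'}\to A_L$ of a map $\phi'\colon A_{L'}^m\to A_{L'}^n$. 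Setting $M'=\cok(\phi')$, right-exactness of base change gives $M\cong A_L\otimes_{A_{L'}}M'$. As $A_L=A_{L'}\otimes_{L'}L$ is free over $A_{L'}$, the functor $A_L\otimes_{A_{L'}}(-)$ is exact and sends projectives to projectives, so $\operatorname{pd}_{A_L}(M)\le \operatorname{pd}_{A_{L'}}(M')\le \operatorname{gl.dim}(A_{L'})$. It therefore suffices to bound $\operatorname{gl.dim}(A_{L'})$ by $\operatorname{gl.dim}(A)$ for finite separable $L'/K$ — a bound which, crucially, will not depend on $L'$.

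So assume now that $L/K$ is finite separable, and let $M$ be any left $A_L$-module; write $M_A$ for its restriction along $A\to A_L$. Since $L$ is free of finite rank over $K$, the algebra $A_L$ is free of finite rank as a right $A$-module, so $A_L\otimes_A(-)$ is exact and preserves projectives; applying it to a projective resolution of $M_A$ of length $\le \operatorname{gl.dim}(A)$ shows $\operatorname{pd}_{A_L}(A_L\otimes_A M_A)\le \operatorname{gl.dim}(A)$. The key point is then that $M$ is a direct summand of $A_L\otimes_A M_A$ as an $A_L$-module; granting this, $\operatorname{pd}_{A_L}(M)\le \operatorname{gl.dim}(A)$ for every $M$, hence $\operatorname{gl.dim}(A_L)\le \operatorname{gl.dim}(A)<\infty$, as desired.

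To see the summand claim, observe that $A_L\otimes_A M_A\cong (A_L\otimes_A A_L)\otimes_{A_L}M$ as $A_L$-modules, under which the canonical action map $A_L\otimes_A M_A\to M$ corresponds to $\mu\otimes_{A_L}\mathrm{id}_M$, where $\mu\colon A_L\otimes_A A_L\to A_L$ is multiplication. Now $A_L\otimes_A A_L\cong A\otimes_K(L\otimes_K L)$ and $A_L\cong A\otimes_K L$ as $A_L$-bimodules, and under these identifications $\mu$ is $\mathrm{id}_A\otimes_K m$ for $m\colon L\otimes_K L\to L$ the multiplication of $L$. Since $L/K$ is finite separable, $L$ is a separable $K$-algebra, so $m$ has an $(L,L)$-bimodule section $s$ (given by the separability idempotent: $s(a)=(a\otimes 1)e$); tensoring with $A$ over $K$ gives an $A_L$-bimodule section of $\mu$, and applying $(-)\otimes_{A_L}M$ produces an $A_L$-linear section of the action map $A_L\otimes_A M_A\to M$. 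This exhibits $M$ as the required direct summand.

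The one genuinely non-formal ingredient is the existence of the separability idempotent in the finite case, i.e. the classical fact that a finite separable field extension is a separable algebra; everything else is bookkeeping with base change and restriction of scalars, and this is also the only place where "separable" enters. I expect the main care to be needed in the reduction step — making sure the global-dimension bound extracted from the finite case is uniform in the finite subextension $L'$, which it is, being simply $\operatorname{gl.dim}(A)$ — and in correctly transporting the section $s$ through the chain of tensor-product identifications in the last paragraph.
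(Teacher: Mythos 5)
Your proof is correct and follows essentially the same route as the paper: both reduce to a finite subextension $L'/K$ via finite presentation of $M$ over the Noetherian ring $A_L$, and both then use the separability of $L'/K$ (the bimodule splitting of $L'\otimes_K L'\to L'$, i.e.\ the separability idempotent) to exhibit an $A_{L'}$-module as a direct summand of a module induced from $A$, from which the projective-dimension bound follows. You spell out slightly more explicitly than the paper the final base-change step $\operatorname{pd}_{A_L}(M)\le\operatorname{pd}_{A_{L'}}(M')$ (using that $A_L$ is free over $A_{L'}$), and you package the finite case as the general bound $\operatorname{gl.dim}(A_{L'})\le\operatorname{gl.dim}(A)$, but the underlying mechanics are identical.
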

This is mostly contained in \cite[Thm.~2.4]{jensen_homological_1982}, but we give a complete proof here for convenience. 
\begin{proof}
Say $A$ has global dimension $n$, and let $M$ be a finite-rank $A_L$-module. Since $A_{L}$ is Noetherian, $M$ is finitely presented, and hence defined over some finite extension $L'\subset L$ of $K$, i.e.~$M=M'\otimes_{L'}L$ for an $A_{L'}$-module $M'$. 
Separability implies that the map $L'\otimes_K L' \to L'$ splits as a map of $L'$-bimodules (see e.g.~\cite[Thm.~5.3.7, Prop.~5.3.16]{rowen_ring_88}), so $M'$ is a direct $A_{L'}$-module summand of
\begin{align*}
L' \otimes_{K} L' \otimes_{L'} M' = L' \otimes_{K} M' = A_{L'} \otimes_{A} M'.
\end{align*}
We thus get the inequality of projective dimensions
\[
pd_{A_{L'}} (M') \le pd_{A_{L'}} (A_{L'} \otimes_{A} M') \le pd_{A} (M') \le n. \qedhere
\]
\end{proof}

\begin{lem}\label{lem.AopFGD}\cite[Cor.~5]{auslander_dimension_1955} If both $A$ and $A^{op}$ are Noetherian, and $A$ has finite global dimension, then $A^{op}$ has finite global dimension.
\end{lem}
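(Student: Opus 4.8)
The plan is to route the argument through the \emph{weak} (Tor-)global dimension, which is automatically left--right symmetric, and to use the fact that over a Noetherian ring it coincides with the ordinary global dimension. Write $\operatorname{wgldim}$ for the supremum of the flat dimensions of left modules. Because $\operatorname{Tor}^A_i(M,N)\cong\operatorname{Tor}^{A^{op}}_i(N,M)$, one has $\operatorname{wgldim}A=\operatorname{wgldim}A^{op}$ for \emph{any} ring $A$. So it is enough to prove that for a left Noetherian ring $A$ the left global dimension equals $\operatorname{wgldim}A$: applying this to $A$ and also to $A^{op}$ (which is left Noetherian by hypothesis) and chaining the equalities yields $\operatorname{lgldim}A=\operatorname{wgldim}A=\operatorname{wgldim}A^{op}=\operatorname{lgldim}A^{op}$, so in particular one side is finite iff the other is.

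For the equality $\operatorname{lgldim}A=\operatorname{wgldim}A$ over a left Noetherian ring I would assemble three standard homological facts. First, since $\operatorname{Tor}$ commutes with filtered colimits in each variable, $\operatorname{wgldim}A$ is already the supremum of the flat dimensions $\operatorname{fd}_A M$ taken over \emph{finitely generated} left modules $M$. Second, for a finitely generated $M$ over a left Noetherian ring one has $\operatorname{fd}_A M=\operatorname{pd}_A M$: resolving $0\to K\to F_{d-1}\to\cdots\to F_0\to M\to 0$ by finitely generated free modules with $d=\operatorname{fd}_A M$, dimension shifting shows $K$ is flat, and $K$ is finitely generated (a submodule of a finitely generated module over a Noetherian ring), hence finitely presented, hence projective; thus $\operatorname{pd}_A M\le d$, and $\operatorname{fd}_A M\le\operatorname{pd}_A M$ always. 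Third, by Auslander's theorem \cite{auslander_dimension_1955} that $\operatorname{lgldim}A=\sup\{\operatorname{pd}_A(A/I):I\text{ a left ideal}\}$, and since every $A/I$ is finitely generated, $\operatorname{lgldim}A\le\sup\{\operatorname{pd}_A M:M\text{ f.g.}\}=\sup\{\operatorname{fd}_A M:M\text{ f.g.}\}=\operatorname{wgldim}A$; the reverse inequality is trivial. Combining the three gives the claim.

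The ingredients are all routine, so there is no real obstacle beyond bookkeeping; the one point worth flagging is that the Noetherian hypothesis is genuinely used on \emph{both} $A$ and $A^{op}$, once for each end of the chain, and that finite generation is essential in the step $\operatorname{fd}=\operatorname{pd}$ (over $\Z$, the non-finitely-generated module $\Q$ is flat but not projective). Indeed the conclusion of the lemma is known to fail for general non-Noetherian rings, so one cannot hope to dispense with these assumptions. If one prefers, the same argument run in both directions shows that all four quantities $\operatorname{lgldim}A$, $\operatorname{rgldim}A$, $\operatorname{lgldim}A^{op}$, $\operatorname{rgldim}A^{op}$ agree, which is the usual form of Auslander's result.
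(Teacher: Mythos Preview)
Your argument is correct and is essentially Auslander's original proof: reduce to the weak global dimension (which is left--right symmetric by $\operatorname{Tor}$-symmetry), and use the Noetherian hypothesis on each side to identify weak and ordinary global dimension via the chain ``finitely generated flat $\Rightarrow$ finitely presented $\Rightarrow$ projective''. The bookkeeping is fine; in particular your use of Auslander's theorem that $\operatorname{lgldim}A=\sup_I \operatorname{pd}_A(A/I)$ to pass back from finitely generated modules to all modules is the clean way to close the loop.

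As for comparison with the paper: there is nothing to compare. The paper does not prove this lemma at all; it simply quotes \cite[Cor.~5]{auslander_dimension_1955} and moves on. So you have supplied the proof the paper chose to outsource, and your write-up is faithful to the cited source.
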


Recall that a \emph{polynomial identity ring} is a ring $A$ satisfying a polynomial identity, which means that there is a non-commutative polynomial $f \in k\langle x_{1}, \ldots, x_{n} \rangle$ such that for any $n$ elements $a_{1},\ldots,a_{n} \in A$ we have $f(a_{1},\ldots,a_{n}) = 0$.
In particular this class includes commutative rings (by taking $f = x_{1}x_{2}-x_{2}x_{1}$), and more generally any ring which is a finite module over its centre.

\begin{lem}\cite[Lem. 4.2.]{stafford_noncommutative_2008}
\label{productFGD}
Let $K$ be an algebraically closed field, and let $A_1, A_2$ be two finitely-generated polynomial identity algebras over $K$. 
If $A_1$ and $A_2$ both have finite global dimension, then so does $A_1\otimes_K A_2$.
\end{lem}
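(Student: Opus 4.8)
The plan is to reduce the global dimension of $B := A_1 \otimes_K A_2$ to the projective dimensions of its \emph{simple} modules, to compute those directly from the projective dimensions of the factors via a Künneth argument, and then to climb back up using the polynomial identity structure. The elementary ingredient is this: if $M_i$ is a left $A_i$-module admitting a finite projective resolution $P^{(i)}_\bullet$, then the total complex of $P^{(1)}_\bullet \otimes_K P^{(2)}_\bullet$ is a finite projective resolution of $M_1 \otimes_K M_2$ over $B$. Indeed each term $P^{(1)}_p \otimes_K P^{(2)}_q$ is a direct summand of a free $B$-module, and since $K$ is a field the homology of the total complex in degree $n$ is $\bigoplus_{p+q=n} H_p(P^{(1)}_\bullet)\otimes_K H_q(P^{(2)}_\bullet)$, which equals $M_1\otimes_K M_2$ for $n=0$ and vanishes otherwise. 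Hence
\[
\operatorname{pd}_B(M_1 \otimes_K M_2) \le \operatorname{pd}_{A_1}(M_1) + \operatorname{pd}_{A_2}(M_2).
\]

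Next I would feed in the PI input. By Regev's theorem a tensor product over a field of PI algebras is again PI, so $B$ is a finitely generated PI algebra; consequently $B$ (and each $A_i$) is a Jacobson ring all of whose simple modules are finite-dimensional over $K$ (the Nullstellensatz for affine PI algebras). Let $T$ be a simple $B$-module, so $\dim_K T < \infty$. Restriction along the embeddings $A_i\hookrightarrow B$ makes $T$ a finite-dimensional module over $\overline{A}_i := A_i/\operatorname{Ann}_{A_i}(T)$, a finite-dimensional $K$-algebra, and $T$ is a faithful simple $\overline{A}_1 \otimes_K \overline{A}_2$-module. Since $K$ is algebraically closed, the endomorphism ring of every simple $\overline{A}_i$-module is $K$, and the classical structure theory of finite-dimensional algebras over an algebraically closed field then gives $T \cong S_1 \otimes_K S_2$ for simple $\overline{A}_i$-modules $S_i$, which are in particular simple $A_i$-modules. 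Combining this with the displayed inequality,
\[
\operatorname{pd}_B(T) \le \operatorname{pd}_{A_1}(S_1) + \operatorname{pd}_{A_2}(S_2) \le \operatorname{gl.dim}(A_1) + \operatorname{gl.dim}(A_2) =: n
\]
for every simple $B$-module $T$.

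The remaining step --- deducing $\operatorname{gl.dim}(B) < \infty$ from the uniform bound $\operatorname{pd}_B(T)\le n$ over simple modules --- is the one that is not formal, and is where I expect the real difficulty to be: this implication is false for arbitrary rings and needs genuine finiteness. In the setting in which the statement is applied, the $A_i$ are module-finite over commutative affine Noetherian central subrings $Z_i$ (this is exactly how the lemma enters the proof of Proposition~\ref{genericFGD}, via Lemma~\ref{lem.Noetherian}), so $B$ is module-finite over the commutative affine Noetherian ring $Z_1 \otimes_K Z_2$, hence Noetherian; one may then localise, $\operatorname{gl.dim}(B) = \sup_{\mathfrak m}\operatorname{gl.dim}(B_{\mathfrak m})$ over maximal ideals $\mathfrak m$ of $Z_1\otimes_K Z_2$, observe that each $B_{\mathfrak m}$ is semilocal, and use that for a semilocal Noetherian ring the global dimension is the supremum of the projective dimensions of its finitely many simple modules --- all of which are simple $B$-modules --- to conclude $\operatorname{gl.dim}(B)\le n$. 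For the statement in full PI generality one instead invokes the corresponding structural results for Noetherian affine PI algebras; either way, this is the only place where the PI hypothesis and the algebraic closedness of $K$ are genuinely used. (An alternative, perhaps cleaner, route would be to argue with weak global dimension, which is subadditive under $\otimes_K$, and then invoke weak $=$ global dimension for the Noetherian ring $B$; I would first verify that the subadditivity statement holds in the generality needed before committing to this variant.)
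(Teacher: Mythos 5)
The paper does not prove this lemma --- it cites it to Stafford--Van den Bergh, \cite[Lem.~4.2]{stafford_noncommutative_2008} --- so there is no internal proof to compare against. Your outline is, as far as I can tell, the intended argument: the K\"unneth bound $\operatorname{pd}_B(M_1\otimes_K M_2)\le \operatorname{pd}_{A_1}(M_1)+\operatorname{pd}_{A_2}(M_2)$, the reduction to simple modules via the PI Nullstellensatz, and the factorization $T\cong S_1\otimes_K S_2$ of a simple module over a tensor product of finite-dimensional algebras over an algebraically closed field are all correct and are the natural ingredients. (Minor remark: you do not need $T$ to be faithful over $\overline A_1\otimes_K\overline A_2$; the classification of simples of a tensor product of finite-dimensional algebras over an algebraically closed field gives $T\cong S_1\otimes_K S_2$ directly, faithful or not.)

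The one place where the write-up is not yet a proof is exactly the step you flag as ``not formal,'' and your stated justification there is slightly off. You claim that for a Noetherian semilocal ring the global dimension equals the supremum of the projective dimensions of the simple modules; as phrased this is not a theorem --- a Noetherian semilocal ring need not be semiperfect (idempotents need not lift mod the Jacobson radical; e.g.\ $k[t]_{(t)}[x]/(x^2-t-1)$ is semilocal Noetherian but not semiperfect), and the ``global dimension is controlled by the simples'' principle really lives in the semiperfect (or complete) world, where minimal resolutions exist. For the case you actually need --- $B$ module-finite over a commutative affine central subring $Z$ --- the correct route is: localize at $\mathfrak m\in\operatorname{Max}(Z)$ (Ext commutes with this for finitely generated modules over the Noetherian ring $B$), then complete; $\widehat B_{\mathfrak m}$ is module-finite over the complete local ring $\widehat Z_{\mathfrak m}$, hence \emph{semiperfect}, and it has the same simple modules as $B_{\mathfrak m}$ (both are $B/\mathfrak m B$-modules). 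That chain does give $\operatorname{gl.dim}(B)=\sup_S\operatorname{pd}_B(S)$ in the module-finite setting, and since in the paper's application $A$ is finite over the central image of $S$, this suffices for the use made of the lemma. For the statement in the generality in which it is quoted --- arbitrary affine PI algebras, which need not be finite over their centres --- you should instead cite the relevant structural result for Noetherian affine PI rings (Stafford--Zhang, \emph{Homological properties of (graded) Noetherian PI rings}, or Brown--Hajarnavis) rather than trying to reprove it; this is presumably what \cite{stafford_noncommutative_2008} does. I would also drop the ``alternative via weak global dimension'' remark: subadditivity of weak global dimension under $\otimes_K$ is not a theorem in this generality, and a correct version of it would essentially be the lemma you are trying to prove.
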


\begin{lem}
\label{thm:smoothnessDescends}
An algebra $A$ over a field $K$ is smooth over $K$ if and only if $A_{\overline{K}}$ is smooth over $\overline K$.
\end{lem}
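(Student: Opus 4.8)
The plan is to prove both implications by reducing to the characterization of smoothness as perfectness of the diagonal bimodule, and tracking this property through the field extension $\overline{K}/K$. Recall $A$ is smooth over $K$ iff $A$ is flat over $K$ (automatic, since $K$ is a field) and $A$ is perfect as a module over $A^{op}\otimes_K A$. Similarly $A_{\overline K}$ is smooth over $\overline K$ iff $A_{\overline K}$ is perfect over $A^{op}_{\overline K}\otimes_{\overline K} A_{\overline K} = (A^{op}\otimes_K A)\otimes_K \overline K$. So the lemma is really the statement that, for the $A^{op}\otimes_K A$-module $A$, being perfect is insensitive to base change along $\overline K/K$.

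For the forward direction, suppose $A$ is perfect over $B := A^{op}\otimes_K A$. Choose a finite resolution $P_\bullet \to A$ by finitely generated projective $B$-modules. Applying $-\otimes_K \overline K$ is exact (flatness of $\overline K$ over $K$), so $P_\bullet\otimes_K\overline K \to A\otimes_K\overline K = A_{\overline K}$ is a finite resolution by finitely generated projective $B\otimes_K\overline K = A^{op}_{\overline K}\otimes_{\overline K}A_{\overline K}$-modules; hence $A_{\overline K}$ is perfect over the latter, i.e.\ $A_{\overline K}$ is smooth over $\overline K$. This direction is essentially formal and is the easy half.

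For the reverse direction, suppose $A_{\overline K}$ is perfect over $B\otimes_K\overline K$. The point is to descend a finite projective resolution back to $K$. One approach: the diagonal bimodule $A$ is a finitely presented $B$-module (since $A$ has finite rank over the Noetherian ring $S$, hence $B$ is Noetherian by Lemma \ref{lem.Noetherian} and $A$ is a finitely generated $B$-module), so it admits a (possibly infinite) resolution $Q_\bullet \to A$ by finitely generated projective $B$-modules. Base-changing to $\overline K$ gives a projective resolution of $A_{\overline K}$, and since $A_{\overline K}$ is perfect the $n$-th syzygy $\Omega^n(A_{\overline K}) = \Omega^n(A)\otimes_K\overline K$ is projective over $B\otimes_K\overline K$ for $n\gg 0$. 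It then suffices to show that a finitely generated $B$-module $N$ with $N\otimes_K\overline K$ projective over $B\otimes_K\overline K$ is itself projective over $B$ — equivalently that finite projective dimension descends along the faithfully flat extension $B\to B\otimes_K\overline K$. This is standard: $\mathrm{pd}_B N$ can be computed via vanishing of $\Ext^i_B(N,-)$ on finitely generated modules, and $\Ext^i_B(N,L)\otimes_K\overline K = \Ext^i_{B\otimes_K\overline K}(N\otimes_K\overline K, L\otimes_K\overline K)$ by flat base change for Ext of a finitely presented module over a Noetherian ring, together with faithful flatness of $\overline K$ over $K$. Alternatively, one can avoid syzygy bookkeeping by invoking that $A_{\overline K}$ perfect over $B_{\overline K}$ implies $A_{\overline K}$ has finite projective dimension $d$ there, and then Lemma \ref{separableFGD}-style arguments are not quite applicable to bimodules directly, so the faithfully-flat-descent route is cleaner.

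The main obstacle I anticipate is purely the reverse direction — specifically, being careful that the flat base change isomorphism for $\Ext$ applies (one needs $B$ Noetherian and the relevant module finitely presented, both of which hold here), and that $\overline K/K$ faithfully flat lets one conclude vanishing of $\Ext^i_B$ from vanishing of $\Ext^i_{B_{\overline K}}$. Neither is deep, but it is where all the hypotheses actually get used. If the authors prefer, this descent statement for perfectness along a faithfully flat ring map is itself a known lemma and could simply be cited.
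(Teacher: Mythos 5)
Your proof is correct and, at bottom, is the same argument the paper gives: both directions reduce to flat base change for $\Ext$ together with (faithful) flatness of $\overline K$ over $K$. The forward direction is identical (tensor a finite projective resolution with $\overline K$). For the converse, the paper argues by contradiction: if $A$ were not perfect over $B=A^{op}\otimes_K A$, there would be $B$-modules $M_i$ with $\Ext^{n_i}_B(A,M_i)\ne 0$ for $n_i\to\infty$, and base change transports this nonvanishing to $\overline K$, contradicting smoothness of $A_{\overline K}$. You instead phrase it via syzygies and a descent-of-projectivity lemma for the faithfully flat map $B\to B_{\overline K}$; the content is the same, since that descent lemma is proved by exactly the $\Ext$ base change plus faithful flatness you invoke, and the same finiteness hypotheses ($B$ Noetherian so that $A$ admits a resolution by finitely generated projective $B$-modules) are needed in both formulations. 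One small remark: you justify the Noetherianity of $B$ by importing the hypothesis from Proposition~\ref{genericFGD} ($A$ of finite rank over a Noetherian $S$), which does not appear in the bare statement of this lemma; this is appropriate since the lemma is applied only in that setting, and the paper's own proof tacitly relies on the same finiteness for the $\Ext$ base change. So this is a correct proof that, modulo packaging (constructive syzygy descent versus contrapositive nonvanishing of $\Ext$), coincides with the paper's.
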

\begin{proof}
If $A$ is a perfect $A^{op} \otimes_{K} A$-module, then obviously $A_{\overline{K}}$ is perfect. For the reverse implication, suppose for a contradiction that $A$ is not smooth.
Then there exists a sequence of $A^{op} \otimes_{K} A$-modules $M_{i}$ and an increasing sequence of integers $n_{i}$ such that $\Ext^{n_{i}}_{A^{op} \otimes_{K} A}(A, M_{i}) \not=0$.
But then $\Ext^{n_{i}}_{A^{op} \otimes_{\overline{K}} A}(A_{\overline{K}}, (M_{i})_{\overline{K}}) \not=0$, contradicting the assumption that $A_{\overline{K}}$ is smooth over $\overline{K}$.
\end{proof}

Recall Grothendieck's generic freeness lemma \cite[Thm.~14.4]{eisenbud_commutative_1995}:
\begin{lem} 
\label{thm:GrothendieckGenericFreeness}
Let $R$ be a Noetherian domain, and $T$ a finitely generated commutative $R$-algebra.
Let $M$ be a finitely generated $T$-module. 
Then there is an element $f \in R$ such that $M_{f}$ is a free $R_{f}$-module. In particular if $M_{K(R)} = 0$ then there exists an $f$ such that $M_{f} = 0$.
\end{lem}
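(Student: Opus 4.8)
This is a classical result of Grothendieck, and the plan is to reprove it by the standard dévissage. Set $K = K(R)$. The backbone I would use is the following elementary observation: if an $R$-module $N$ carries an exhaustive increasing filtration $0 = N_{-1}\subseteq N_0\subseteq N_1\subseteq\cdots$ by submodules and there is a \emph{single} nonzero $f\in R$ making every $(N_j/N_{j-1})_f$ free over $R_f$, then $N_f$ is free over $R_f$ — because an extension of a free module by a free module splits (the quotient being projective), so one can choose compatible splittings and realise $N_f$ as the direct sum of all the $(N_j/N_{j-1})_f$. I would first use a prime filtration of $M$ over the Noetherian ring $T$ to reduce to $M = T/\mathfrak q$ for a prime $\mathfrak q$ (there are finitely many such quotients, so a common $f$ suffices), and then, after replacing $T$ by $T/\mathfrak q$, to the case where $T$ is a domain and $M = T$; if $R\to T$ is not injective one takes $f$ in its kernel and is done, so the real case is $R\subseteq T$, an extension of domains with $T$ finitely generated over $R$.

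The crux is Noether normalization over the fraction field followed by a spreading-out argument. Since $T\otimes_R K$ is a finitely generated $K$-domain, it is a finite module over a polynomial subring $K[t_1,\dots,t_d]$ with the $t_i\in T$ algebraically independent over $K$. Clearing denominators in the finitely many integral relations satisfied by a chosen set of $R$-algebra generators of $T$, I would find a nonzero $f\in R$ for which $t_1,\dots,t_d$ stay algebraically independent over $R_f$ and $T_f$ becomes a \emph{finite} module over the polynomial ring $P := R_f[t_1,\dots,t_d]$. Since $P$ is free over $R_f$ (on its monomials), this reduces the lemma to the statement that \emph{a finite module over a polynomial ring $R'[x_1,\dots,x_n]$ over a Noetherian domain $R'$ becomes free over $R'$ after inverting one nonzero element of $R'$}.

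I would prove this reduced statement by induction on $n$. For $n = 0$ it is generic freeness of a finitely generated module over a Noetherian domain: take a prime filtration with quotients $R'/\mathfrak p_i$, invert one nonzero element of each nonzero $\mathfrak p_i$ so that those quotients vanish, and the surviving filtration exhibits the localisation as an iterated split extension of copies of $R'_f$. For the inductive step I would set $A = R'[x_1,\dots,x_{n-1}]$, take generators $m_1,\dots,m_r$ of the finite $A[x_n]$-module $M$, and put $M_j = \sum_i \sum_{k\le j} A x_n^{k} m_i$. Each $M_j$ is a finite $A$-module with $M = \bigcup_j M_j$ and $M_{j+1} = M_j + x_n M_j$, so multiplication by $x_n$ gives surjections $M_j/M_{j-1}\twoheadrightarrow M_{j+1}/M_j$, which are isomorphisms for $j$ large because $A$ is Noetherian. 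Thus only finitely many quotients $M_j/M_{j-1}$ occur up to isomorphism, the induction hypothesis gives one $f$ making all of them free over $R'_f$, and the filtration observation from the first paragraph finishes the argument.

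The step I expect to be the genuine obstacle is exactly this induction on the number of polynomial variables: one must filter $M$ by an \emph{infinite} chain of $A$-submodules, and the argument only closes because the successive quotients stabilize — this is the one place where Noetherianity, and not mere formal nonsense, is doing the work. The remaining `in particular' clause is immediate: if $M_f$ is free over $R_f$ then $M_{K(R)}\cong M_f\otimes_{R_f}K(R)$ is free of the same rank over $K(R)$, so $M_{K(R)} = 0$ forces that rank, and hence $M_f$, to be zero.
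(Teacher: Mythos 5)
The paper does not prove this lemma at all: it is stated as a recollection and attributed directly to Eisenbud's \emph{Commutative Algebra}, Theorem~14.4. So there is no internal proof to compare against; what you have written is a reproof of the cited result. Your argument is correct and is essentially the standard d\'evissage that one finds in Eisenbud and in Grothendieck's original EGA~IV treatment: prime filtration to reduce to $M = T/\mathfrak q$, generic Noether normalisation over $K(R)$ with spreading out to reduce to a finite module over a polynomial ring $R'[x_1,\dots,x_n]$, and induction on $n$ using the $x_n$-filtration whose associated graded pieces eventually stabilise. Two small points worth flagging for precision, neither of which is a gap: the algebraic independence of $t_1,\dots,t_d$ over $R_f$ is automatic (a nontrivial relation over $R_f\subset K$ would contradict independence over $K$), so no inverting is needed for that; and the phrase ``isomorphisms for $j$ large because $A$ is Noetherian'' deserves a sentence of justification --- the clean way to see it is that the composites $M_0 = M_0/M_{-1}\twoheadrightarrow M_j/M_{j-1}$ are $A$-linear surjections from the Noetherian $A$-module $M_0$, so their kernels form an ascending chain that stabilises, forcing the successive surjections to be isomorphisms from that point on.
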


\begin{lem}
\label{thm:OurGenericFreeness}
Let $R$ be a Noetherian domain, let $T$ be a finitely generated commutative $R$-algebra, and let $\Lambda$ be a $T$-algebra which is finitely generated as a $T$-module. Let $P$ be a finitely generated $\Lambda$-module such that $P_{K(R)}$ is a projective $\Lambda_{K(R)}$-module. Then there exists an $f \in R$ such that $P_{f}$ is a projective $\Lambda_{f}$-module.
\end{lem}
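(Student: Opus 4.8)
The plan is to reduce projectivity of $P$ over $\Lambda$ to the vanishing of a single $\Ext^{1}$-group, observe that this group is a finitely generated module over the \emph{commutative} ring $T$, and then invoke generic freeness exactly as in Lemma~\ref{thm:GrothendieckGenericFreeness}.

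First, $T$ is Noetherian by the Hilbert basis theorem, so $\Lambda$ is Noetherian by Lemma~\ref{lem.Noetherian} and $P$ is finitely presented over $\Lambda$. Choose a surjection $\Lambda^{n}\to P$ with kernel $Q$; since $\Lambda$ is Noetherian, $Q$ is finitely generated. The basic point is that $P$ is projective over $\Lambda$ if and only if this surjection splits, which in turn holds if and only if $\Ext^{1}_{\Lambda}(P,Q)=0$ --- only the single module $Q$ need be tested, not all $\Lambda$-modules, because a direct summand of a free module is projective.

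Next I would compute $\Ext^{1}_{\Lambda}(P,Q)$ from a resolution of $P$ by finitely generated free $\Lambda$-modules, available by Noetherianity. Each term $\Hom_{\Lambda}(\Lambda^{n_{i}},Q)\cong Q^{n_{i}}$ is a finitely generated $T$-module --- here we use that the structure map $T\to\Lambda$ lands in the centre, so every $\Lambda$-module restricts to a $T$-module, and that $\Lambda$ is module-finite over $T$ --- hence $\Ext^{1}_{\Lambda}(P,Q)$, being a subquotient, is a finitely generated $T$-module. Since $K(R)$ and each $R_{f}$ are flat over $R$ and a finite free resolution stays a finite free resolution after $R$-base change, $\Ext$ commutes with both localisations; moreover, localising the short exact sequence $0\to Q\to\Lambda^{n}\to P\to 0$ keeps it exact. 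By hypothesis $P_{K(R)}$ is projective over $\Lambda_{K(R)}$, so $\Ext^{1}_{\Lambda_{K(R)}}\!\big(P_{K(R)},Q\otimes_{R}K(R)\big)=0$, and therefore $\Ext^{1}_{\Lambda}(P,Q)\otimes_{R}K(R)=0$.

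Applying Lemma~\ref{thm:GrothendieckGenericFreeness} (the ``in particular'' clause) to the finitely generated $T$-module $\Ext^{1}_{\Lambda}(P,Q)$ yields an $f\in R$ with $\Ext^{1}_{\Lambda}(P,Q)_{f}=0$, hence $\Ext^{1}_{\Lambda_{f}}\!\big(P_{f},Q\otimes_{R}R_{f}\big)=0$; thus the localised sequence $0\to Q\otimes_{R}R_{f}\to\Lambda_{f}^{n}\to P_{f}\to 0$ splits and $P_{f}$ is a direct summand of a free $\Lambda_{f}$-module, so it is projective. The only thing requiring care is the base-change bookkeeping for $\Ext$ --- checking that the relevant $T$-module structure is the one compatible with the localisations, and that exactness of localisation makes the localised sequence short exact --- but this is routine; the genuine content is the reduction to the vanishing of $\Ext^{1}(P,Q)$ for that single module $Q$, together with its finiteness over the commutative base $T$.
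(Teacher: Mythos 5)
Your proof is correct, but it takes a genuinely different route from the paper's. The paper works directly with the splitting: it produces maps $i\colon P_{K}\to F_{K}$ and $p\colon F_{K}\to P_{K}$ with $pi=\mathrm{id}$ for a free module $F$, spreads them out to some $R_{f}$, and then applies generic freeness to the kernel and cokernel of $p_{f}i_{f}-\mathrm{id}_{P_{f}}$ to make them vanish after a further localisation. You instead choose once and for all a short exact sequence $0\to Q\to\Lambda^{n}\to P\to 0$, reduce projectivity of $P$ to the vanishing of the single obstruction group $\Ext^{1}_{\Lambda}(P,Q)$, observe that this is a finitely generated module over the commutative Noetherian ring $T$ whose formation commutes with the flat base changes $R\to R_{f}$ and $R\to K(R)$ (both sides computed from a resolution of $P$ by finitely generated free $\Lambda$-modules), and apply generic freeness to that one $T$-module. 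Both arguments hinge on exactly the same engine --- the ``in particular if $M_{K(R)}=0$ then $M_{f}=0$'' consequence of Grothendieck's generic freeness --- so the proofs are of comparable depth; your version packages the spreading-out step into one $\Ext$ computation and avoids tracking two auxiliary modules $L$, $C$ and a second localisation, at the cost of a bit more homological bookkeeping (the commutation of $\Ext^{1}$ with localisation, which you correctly justify via finite free resolutions over the Noetherian ring $\Lambda$). Either version is a perfectly acceptable proof of the lemma.
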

\begin{proof}
Let $K = K(R)$.
Since $P_{K}$ is projective, there exists a free module $F = \Lambda^{\oplus n}$ and maps $i \colon P_{K} \to F_{K}$ and $p \colon F_{K} \to P_{K}$, such that $pi = \id_{P_{K}}$.
There exists an $f \in R$ such that the maps are defined over $R_{f}$, i.e.~there are maps $i_{f} \colon P_{f} \to F_{f}$ and $p_{f} \colon F_{f} \to P_{f}$ which localise to $i$ and $p$.

Now $p_{f}i_{f} - \id_{P_{f}}$ has a kernel $L$ and a cokernel $C$, which are finitely generated $T$-modules.
We have $L_{K} = C_{K} = 0$, and so by Lemma \ref{thm:GrothendieckGenericFreeness}, there exists a $g \in R_{f}$ such that $L_{g} = C_{g} = 0$.
This implies that $p_{fg}i_{fg} = \id_{P_{fg}}$, and so $P_{fg}$ is a projective $\Lambda_{fg}$-module.
\end{proof}

\begin{cor}
\label{thm:FiniteProjectiveDimensionIsZariskiOpen}
Let $R,T$ and $\Lambda$ be as in the previous lemma.
If $M$ is a finitely generated $\Lambda$-module such that $M_{K(R)}$ is a perfect $\Lambda_{K(R)}$-module, then there exists an $f \in R$ such that $M_{f}$ is a perfect $\Lambda_{f}$-module.
\end{cor}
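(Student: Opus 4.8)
The plan is to bootstrap from Lemma~\ref{thm:OurGenericFreeness}, reducing the statement about perfectness to a statement about projectivity of a single high syzygy. First I would observe that $\Lambda$ is Noetherian: the commutative $R$-algebra $T$ is Noetherian by the Hilbert basis theorem, and $\Lambda$ is finite as a $T$-module, so the argument of Lemma~\ref{lem.Noetherian} shows every one-sided ideal of $\Lambda$ is finitely generated. Consequently the finitely generated $\Lambda$-module $M$ admits a resolution
\[
\cdots \to Q_{1} \to Q_{0} \to M \to 0
\]
by finitely generated projective $\Lambda$-modules, and each syzygy $\Omega^{i}M$ (the kernel of $Q_{i-1}\to Q_{i-2}$, with the conventions $\Omega^{0}M = M$ and $\Omega^{1}M = \ker(Q_{0}\to M)$) is again a finitely generated $\Lambda$-module.

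Next, since localisation is exact, $(Q_{\bullet})_{K(R)}$ is a projective resolution of $M_{K(R)}$ over $\Lambda_{K(R)}$ whose $i$-th syzygy is $(\Omega^{i}M)_{K(R)}$. By hypothesis $M_{K(R)}$ is perfect, say of projective dimension $d$; a standard consequence of Schanuel's lemma is that the $d$-th syzygy in any projective resolution of a module of projective dimension at most $d$ is itself projective, so $(\Omega^{d}M)_{K(R)}$ is a projective $\Lambda_{K(R)}$-module.

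Now apply Lemma~\ref{thm:OurGenericFreeness} to the finitely generated $\Lambda$-module $P := \Omega^{d}M$: there exists $f \in R$ such that $P_{f} = (\Omega^{d}M)_{f}$ is a projective $\Lambda_{f}$-module. Localising the truncated resolution then yields an exact sequence of $\Lambda_{f}$-modules
\[
0 \to (\Omega^{d}M)_{f} \to (Q_{d-1})_{f} \to \cdots \to (Q_{0})_{f} \to M_{f} \to 0
\]
in which every term is a projective $\Lambda_{f}$-module (localisations of projectives are projective). Hence $M_{f}$ has projective dimension at most $d$ over $\Lambda_{f}$, i.e.\ it is perfect, which is the assertion of the corollary.

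I do not anticipate a serious obstacle. The only points needing a little care are the Noetherian property of $\Lambda$ (which guarantees resolutions by finitely generated projectives and finitely generated syzygies, so that Lemma~\ref{thm:OurGenericFreeness} is applicable) and the compatibility of the syzygy construction with the flat base change $R \to K(R)$; both are routine.
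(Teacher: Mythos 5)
Your proof is correct and follows essentially the same route as the paper: take a resolution of $M$ by finitely generated projective $\Lambda$-modules (using the Noetherian property from Lemma~\ref{lem.Noetherian}), observe that a high syzygy becomes projective after localising to $K(R)$, apply Lemma~\ref{thm:OurGenericFreeness} to that syzygy, and conclude by truncating. The only cosmetic difference is that you explicitly cite Schanuel's lemma where the paper treats the projectivity of the high syzygy as immediate.
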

\begin{proof}
Take a projective resolution $\cdots \to M^{i} \to M^{i-1} \to \cdots$ of $M$, and let $C^{n}$ be the kernel of $M^{n} \to M^{n-1}$.
Since $\Lambda$ is Noetherian (Lemma \ref{lem.Noetherian}), we may assume that all $M^{i}$ and $C^{i}$ are finitely presented.
Since $M_{K}$ has finite projective dimension, there is an $n$ such that $C^{n}_{K}$ is a projective $\Lambda_{K}$-module. 
Applying Lemma \ref{thm:OurGenericFreeness}, we find an $f$ such that $C^{n}_{f}$ is projective, and so $C^{n}_{f} \to M^{n-1}_{f} \to \cdots \to M^{0}_{f}$ is a projective resolution of $M_{f}$.
\end{proof}

\bibliographystyle{alpha}

\bibliography{bibliography}

\end{document}